\newtheorem{thm}{Theorem}[section]
\newtheorem{Lemma}[thm]{Lemma}
\newtheorem{cor}[thm]{Corollary}
\newtheorem{conj}[thm]{Conjecture}
\title{\bf{Covering a cubic graph by 5 perfect matchings} }
\author{Wuyang Sun
\\\small{Center for Discrete Mathematics,
Fuzhou University, Fuzhou, Fujian 350108, China}
\\\small{E-mail addresses:swywuyang@163.com}}
\date{}
\begin{document}

\pagestyle{plain} \pagenumbering{arabic} \maketitle
\begin{abstract}
Berge Conjecture states that every bridgeless cubic graph
has 5 perfect matchings such that each edge is contained in at least
one of them. In this paper, we
show that Berge Conjecture holds for two classes of cubic graphs, cubic graphs with a circuit missing only one vertex and bridgeless cubic graphs with a 2-factor consisting of two circuits. The first part of this result implies that Berge Conjecture holds for hypohamiltonian cubic graphs.
\end{abstract}

\textbf{MSC 2010:} 05C70

\textbf{Keywords:} Berge Conjecture; Fulkerson Conjecture; cubic graph; hypohamiltonian graph
\section{Introduction}
Graphs in this article may contain multiple edges but contain no loops. A {\em $k$-factor} of a graph $G$ is a spanning $k$-regular subgraph of $G$. The set of edges in a 1-factor of a graph $G$ is called a {\em perfect matching} of $G$. A {\em matching} of a graph $G$ is a set of edges in a 1-regular subgraph of $G$. A {\em perfect matching cover} of a graph
$G$ is a set of perfect matchings of $G$ such that each edge of $G$
is contained in at least one member of it. The {\em order} of a
perfect matching cover is the number of perfect matchings in it.

One of the first theorems in graph theory, Petersen's Theorem from 1891
\cite{Peterson}, states that every bridgeless cubic graph has a
perfect matching. By Tutte's Theorem from 1947 \cite{Tutte}, which states that a graph $G$ has a perfect matching if and only if the number of odd components of $G-X$ is not greater than the size of $X$ for all $X\subseteq V(G)$, we can obtain that every edge in a bridgeless cubic graph $G$ is contained in a perfect matching of $G$. This implies that every bridgeless cubic graph has a
perfect matching cover. What is the minimum number $k$ such that
every bridgeless cubic graph has a perfect matching cover of order
$k$? Berge conjectured this number is 5 (unpublished, see e.g.
\cite{Fouquet,Mazzuoccolo}).

\begin{conj}[Berge Conjecture]\label{Berge} Every bridgeless cubic
graph has a perfect matching cover of order at most $5$.
\end{conj}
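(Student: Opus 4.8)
The statement to be proved is the full Berge Conjecture, which is a notoriously open problem; accordingly I will describe the line of attack that a result of this kind must follow, together with the precise point at which every known approach stalls.

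The plan is to reduce to the genuinely hard case and then to exploit a $2$-factor. First I would dispose of the $3$-edge-colorable (Class~$1$) graphs: their edge set splits into three perfect matchings, giving a cover of order $3\le 5$ for free. By Petersen's Theorem every bridgeless cubic graph has a perfect matching, hence also a $2$-factor $F$; its complement $M_1=E(G)\setminus E(F)$ is itself a perfect matching, and it already covers every chord. It therefore remains to cover the edges of $F$ by four further perfect matchings $M_2,\dots,M_5$. The even circuits of $F$ are two-edge-colorable and are easily absorbed, so the entire difficulty concentrates on the \emph{odd} circuits of $F$: each contributes an unavoidable parity defect, and the task becomes that of choosing the four additional matchings so that their rotations through the odd circuits leave no edge of $F$ uncovered.

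To make this tractable I would first simplify the graph by standard reductions: suppression of $2$-edge-cuts and of short cycles shows that a minimal counterexample may be assumed cyclically $4$-edge-connected of girth at least $5$, that is, a snark. The hoped-for mechanism is an induction on the number of odd circuits of $F$, peeling off one odd circuit at a time while re-routing the partial matchings through the remaining structure. A powerful external lever is the Fulkerson Conjecture, which asks for six perfect matchings covering every edge exactly twice and which implies Berge; moreover, by Mazzuoccolo's equivalence \cite{Mazzuoccolo}, a proof of Berge for all bridgeless cubic graphs would in turn yield Fulkerson, so the general statement is essentially as hard as these central problems of the area.

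The main obstacle is precisely the simultaneous control of all odd circuits: there is no known decomposition of the class of snarks fine enough to drive such an induction, and guaranteeing that the leftover uncovered edges arising from distinct odd circuits can always be mopped up within a budget of five matchings is exactly where the argument breaks down. For this reason the realistic and attainable target is the conjecture under hypotheses that confine the odd-circuit difficulty to a single configuration: a $2$-factor consisting of exactly two circuits, and a circuit missing only one vertex. In each of these cases the number of odd circuits is small enough that the four matchings $M_2,\dots,M_5$ can be written down and verified directly, and the circuit-missing-one-vertex case additionally yields Berge for hypohamiltonian cubic graphs.
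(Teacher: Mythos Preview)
You correctly identify the statement as an open conjecture: the paper does not prove it in general, and your proposal is not a proof either but an honest account of where the difficulty lies. In that sense there is no ``gap'' to flag beyond the one you yourself flag, namely that controlling all odd circuits of a $2$-factor simultaneously is exactly the unresolved point.

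Where your account diverges from the paper is in the treatment of the two special cases you single out at the end. You describe them as situations in which ``the four matchings $M_2,\dots,M_5$ can be written down and verified directly''. That is a significant understatement of what the paper actually does. The engine is a technical lemma (Lemma~\ref{2pm}) about a cubic graph with a $3$-edge-colouring $M_1,M_2,M_3$ in which both $M_1\cup M_2$ and $M_1\cup M_3$ are Hamiltonian; it produces, by induction on $|V(G)|$ via contraction to a smaller auxiliary cubic graph, two further perfect matchings $M_4,M_5$ with precise control over which circuit of $G[M_4\cup M_5]$ carries a prescribed subset of $M_2$ and a prescribed edge of $M_3$. This lemma is then fed into Lemma~\ref{3pm} and the two theorems; the arguments repeatedly pass to auxiliary graphs obtained by replacing alternating subpaths by single edges, apply the lemma there, and lift the resulting matchings back. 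None of this is a direct verification: the matchings are not exhibited explicitly but are produced by a recursive construction whose correctness depends on the rather delicate bookkeeping in conditions (1)--(3) of Lemma~\ref{2pm}. Your high-level plan (start from a $2$-factor, isolate the odd circuits, build extra matchings) is compatible with the paper's, but the actual mechanism is this auxiliary-graph induction rather than anything one could call writing the matchings down.
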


The following conjecture is attributed to Berge in \cite{Seymour},
and was first published in an paper by Fulkerson \cite{Fulkerson}.

\begin{conj}[Fulkerson Conjecture]\label{Fulkerson} Every bridgeless cubic graph has six perfect matchings such that each edge belongs to
exactly two of them.
\end{conj}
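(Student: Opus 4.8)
The plan is to argue by reduction to the hardest sub-case and then attempt a direct construction there, while remaining candid that it is this final step which keeps the conjecture open. First I would dispose of the easy case. If $G$ is $3$-edge-colourable (Class 1), a proper $3$-edge-colouring partitions $E(G)$ into three perfect matchings $M_1,M_2,M_3$, and the six matchings $M_1,M_1,M_2,M_2,M_3,M_3$ cover every edge exactly twice. Hence, taking $G$ to be a counterexample of minimum order, $G$ must be of Class 2.

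Next I would cut down the connectivity. A bridgeless cubic graph has no $1$-edge-cut, and I would show that a minimal counterexample cannot have a $2$- or $3$-edge-cut either: along such a small cut one splits $G$ into two smaller bridgeless cubic graphs, applies minimality to obtain a Fulkerson cover of each piece, and then glues the covers together, pairing the six matchings across the cut so that the cut edges inherit multiplicity exactly two. This is the standard behaviour of the Fulkerson property under edge cuts of size at most three. Consequently a minimal counterexample may be assumed cyclically $4$-edge-connected, i.e. a snark.

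It remains to construct the six matchings for a snark, and here the cleanest formulation is polyhedral. By Edmonds' matching polytope theorem the point $\tfrac13\mathbf{1}$, assigning weight $1/3$ to every edge, lies in the perfect matching polytope of any bridgeless cubic graph (each degree constraint reads $\tfrac13\cdot 3=1$, and every odd cut has size at least $3$). A Fulkerson cover is exactly a realisation of $\tfrac13\mathbf{1}$ as the \emph{uniform} average $\tfrac16\sum_{i=1}^{6}\chi_{M_i}$ of six perfect matchings, so the task is to upgrade mere membership in the polytope to this rigid six-term representation. The natural handle is a $2$-factor: starting from a $2$-factor of the snark and cyclically rotating the matchings around its circuits, one tries to manufacture the six classes with the correct multiplicities. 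When the snark carries a spanning circuit missing one vertex, or a $2$-factor consisting of just two circuits, this rotation can be controlled directly, which is precisely the structural input exploited in this paper and which, through the known equivalence of the Berge and Fulkerson conjectures \cite{Mazzuoccolo}, yields the stated results.

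The main obstacle, and the reason the conjecture remains open, is exactly this last construction for a general snark: when the $2$-factor consists of many odd circuits with no controllable interaction, there is no known uniform way to assemble the six matchings, and once cyclic $4$-edge-connectivity has been reached the minimality and reduction machinery provides no further leverage. I therefore expect the honest, provable outcome to be a restricted one --- verifying the construction on structured snark families possessing a single long circuit or a two-circuit $2$-factor --- rather than a resolution of the general case, which is the true crux.
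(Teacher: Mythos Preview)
The statement you are attempting is an open conjecture; the paper does not prove it and offers no proof to compare against. It is listed as Conjecture~\ref{Fulkerson}, and the paper's contribution is to verify the \emph{Berge} conjecture (Conjecture~\ref{Berge}) for two restricted families of cubic graphs, not the Fulkerson conjecture for those families. Your write-up is honest in acknowledging that the general case is beyond reach, so in that sense there is no ``gap'' to name: you do not claim a proof, and there is none.

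There is, however, a genuine error in your final reduction. You assert that the paper's structural results, ``through the known equivalence of the Berge and Fulkerson conjectures \cite{Mazzuoccolo}, yield the stated results'' for Fulkerson. This is a misreading of Mazzuoccolo's theorem. His equivalence is global: \emph{if} Berge holds for \emph{every} bridgeless cubic graph, \emph{then} Fulkerson holds for every bridgeless cubic graph, and conversely. It is not a graph-by-graph implication. The paper itself stresses this point explicitly in the introduction: ``The equivalence of these two conjectures does not imply that Conjecture~\ref{Fulkerson} holds for a given bridgeless cubic graph satisfying Conjecture~\ref{Berge}. It is still open question whether this holds.'' So verifying Berge for hypohamiltonian cubic graphs or for graphs with a two-circuit $2$-factor does \emph{not} give Fulkerson for those classes, and your sketch does not close that gap. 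Indeed, H\"{o}ggkvist's conjecture that Fulkerson holds for hypohamiltonian cubic graphs is singled out in the paper as still open, despite the Berge result proved here.

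A secondary point: your reduction across $3$-edge-cuts, while morally standard, is not as automatic as you suggest. Gluing two Fulkerson covers across a $3$-cut requires matching up the trace patterns of the six matchings on the three cut edges, and it is not obvious a priori that the patterns on the two sides can always be permuted into agreement. This step would need an actual argument.
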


Mazzuoccolo \cite{Mazzuoccolo} proved that Conjectures \ref{Berge} and \ref{Fulkerson}
are equivalent. The equivalence of these two conjectures does not imply that Conjecture \ref{Fulkerson} holds for a given bridgeless cubic graph satisfying Conjecture \ref{Berge}. It is still open question whether this holds.

A cubic graph $G$ is called {\em $3$-edge-colorable} if $G$ has three edge-disjoint perfect matchings. It is trivial that Conjectures \ref{Berge} and \ref{Fulkerson} hold for 3-edge-colorable cubic graphs. Non-3-edge-colorable and cyclically 4-edge-connected cubic graphs with girth at least 5 are called {\em snarks}. Conjecture \ref{Fulkerson} have been verified for some families of snarks, such as flower snarks, Goldberg snarks, generalised Blanu\v{s}a snarks, and Loupekine snarks \cite{Fouquet1,Hao,Karam}.

Besides the above snarks, some families of cubic graphs have been
confirmed to satisfy Conjecture \ref{Berge}. Steffen \cite{Steffen} showed that Conjecture \ref{Berge}
holds for bridgeless cubic graphs which have no nontrivial
3-edge-cuts and have 3 perfect matchings which miss at most 4 edges.
It is proved by Hou et al. \cite{Hou} that every almost Kotzig graph
has a perfect matching cover of order 5. Esperet and Mazzuoccolo \cite{Esperet} showed
that there are infinite cubic graphs of which every perfect matching
cover has order at least 5 and the problem that deciding whether a
bridgeless cubic graph has a perfect matching cover of order at most
4 is NP-complete.

In this paper, we show that Berge Conjecture holds for a cubic graph which has a vertex whose removel results a hamiltonian graph. This implies that Berge Conjecture holds for hypohamiltonian cubic graphs, a class of cubic graphs which was conjectured to satisfy Fulkerson Conjecture by H\"{o}ggkvist \cite{Hoggkvist}. A graph $G$ is called {\em hypohamiltonian} if $G$ itself is not hamiltonian but the removel of any vertex of $G$ results a hamiltonian graph. Chen and Fan \cite{Chen} verified the Fulkerson Conjecture for several known classes of hypohamiltonian graphs in the literatures. Now H\"{o}ggkvist's conjecture is still open.


In this paper, we also show that Berge Conjecture holds for bridgeless cubic graphs with a 2-factor consisting of two circuits. This class of cubic graphs include permutation graphs and permutation graphs include generalized Petersen graphs. Fouquet and Vanherpe \cite{Fouquet} showed that every permutation graph have a perfect matching cover of order 4. It was proved by Castagna, Prins \cite{Castagna} and Watkins \cite{Watkins} that all generalized Petersen graphs but the original Petersen graph are 3-edge-colorable.
\section{A technical lemma}
Some notations will be used in this paper. Let $G$ be a graph with vertex-set $V(G)$ and edge-set $E(G)$. For $X\subseteq V(G)$, we denote by $G[X]$ the subgraph of $G$ induced by $X$ and denote by $G-X$ the subgraph of $G$ induced by $V(G)\backslash X$. For $F\subseteq E(G)$, we denote by $G[F]$ the subgraph induced by $F$ and denote by $G-F$ the subgraph of $G$ with vertex-set $V(G)$ and edge-set $E(G)\backslash F$. For $F_{1},F_{2}\subseteq E(G)$, we denote by $F_{1}\bigtriangleup F_{2}$ the set $(F_{1}\backslash F_{2})\cup(F_{2}\backslash F_{1})$. A path $P$ of length at least 1 in $G$ is called a {\em $F_{1}$-$F_{2}$ alternating} path of $G$ if $E(P)\subseteq F_{1}\cup F_{2}$ and each of $E(P)\cap F_{1}$ and $E(P)\cap F_{2}$ is a matching of $G$. For a positive integer $n$, we denote by $[n]$ the set $\{1,2,\dots,n\}$.

Now we present a technical lemma, which plays a key role in the proof of our main results.
\begin{Lemma}\label{2pm} Let $G$ be a cubic graph which has three
edge-disjoint perfect matchings $M_{1}$, $M_{2}$ and $M_{3}$ such
that both $M_{1}\cup M_{2}$ and $M_{1}\cup M_{3}$ induce hamiltonian
circuits of $G$. Let $F$ be a non-empty subset of $M_{2}$ and $\alpha$ be an edge in $M_{3}$. We have that $G$ has two perfect matchings $M_{4}$ and $M_{5}$ such that
\vspace{-0.8em}
\begin{enumerate}[$(1)$]
\addtolength{\itemsep}{-2ex}
\item either $M_{4}\cap M_{5}\subseteq M_{3}\subseteq M_{4}\cup M_{5}$ or $M_{4}\cap M_{5}\subseteq M_{1}\subseteq M_{4}\cup M_{5}$,

\item $G[M_{4}\cup M_{5}]$ has a circuit $C$ containing $\alpha$ such that $F\cap E(C)\neq\emptyset$ and every circuit different from $C$ in $G[M_{4}\cup M_{5}]$ contains no edges in $F$, and

\item if $M_{1}\subseteq M_{4}\cup M_{5}$, then $G$ has a circuit $C'$ containing $\alpha$ such that $M_{2}\cap E(C)\cap E(C')=\emptyset$, $M_{3}\backslash(M_{4}\cup M_{5})\subseteq M_{3}\backslash E(C')$
and $M_{3}\backslash E(C')$ is a perfect matching of $G-V(C')$.
\end{enumerate}
\vspace{-0.8em}
\end{Lemma}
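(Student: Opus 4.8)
The plan is to record the structural consequences of the hypotheses and then to split into two cases according to whether the circuit of the $2$-factor $M_{2}\cup M_{3}$ containing $\alpha$ also meets $F$; the two cases will produce the two alternatives in $(1)$. First note that three pairwise edge-disjoint perfect matchings in a cubic graph exhaust $E(G)$, so $M_{2}\cup M_{3}=E(G)\setminus M_{1}$ is a $2$-factor of $G$ whose circuits alternate between $M_{2}$ and $M_{3}$; by hypothesis $M_{1}\cup M_{2}$ and $M_{1}\cup M_{3}$ are Hamiltonian circuits, so the edges of $M_{2}$ are chords of $M_{1}\cup M_{3}$ and the edges of $M_{3}$ are chords of $M_{1}\cup M_{2}$. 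Let $D$ be the unique circuit of $M_{2}\cup M_{3}$ containing $\alpha$; since $D$ alternates between $M_{2}$ and $M_{3}$, the set $V(D)$ is $M_{3}$-closed and $M_{3}\bigtriangleup E(D)$ is again a perfect matching of $G$.

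Case $1$: $E(D)\cap F\neq\emptyset$. I would take $M_{4}=M_{3}$ and $M_{5}=M_{3}\bigtriangleup E(D)$. Then $M_{4}\cap M_{5}=M_{3}\setminus E(D)\subseteq M_{3}\subseteq M_{4}\cup M_{5}$, which is the first alternative of $(1)$; the subgraph $G[M_{4}\cup M_{5}]=G[M_{3}\cup(M_{2}\cap E(D))]$ consists of the circuit $D$ together with the doubled edges $M_{3}\setminus E(D)$, so its only circuit is $D$, which contains $\alpha$ and meets $F$, giving $(2)$ with $C=D$; finally $M_{1}$ is edge-disjoint from $M_{2}\cup M_{3}$, so $M_{1}\not\subseteq M_{4}\cup M_{5}$ and $(3)$ is vacuous.

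Case $2$: $E(D)\cap F=\emptyset$; this is the substantial case, aiming at the second alternative of $(1)$. I would set $M_{4}=M_{1}$, so that $M_{1}\subseteq M_{4}\cup M_{5}$ and $M_{4}\cap M_{5}=M_{1}\cap M_{5}\subseteq M_{1}$ for every perfect matching $M_{5}$, and build $M_{5}$ from $M_{3}$. Fix $f\in F$; it lies on a circuit $D'\neq D$ of the $2$-factor. A first attempt is $M_{5}=M_{3}\bigtriangleup E(D')$, which is a perfect matching containing $f$ and still containing $\alpha$ (since $\alpha\in M_{3}\setminus E(D')$), and for which $M_{1}\cup M_{5}=(M_{1}\cup M_{3})\bigtriangleup E(D')$ is $2$-regular, hence a disjoint union of $M_{1}$-alternating circuits. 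For $(3)$ the natural candidate is $C'=D$: $V(D)$ is $M_{3}$-closed and $\alpha\in E(D)$, and since every $M_{2}$-edge available to $M_{5}$ lies on $D'$, which is vertex-disjoint from $D$, the condition $M_{2}\cap E(C)\cap E(C')=\emptyset$ is automatic; what remains is to ensure that every $M_{3}$-edge of $D$ lies on the circuit $C$ of $G[M_{1}\cup M_{5}]$ through $\alpha$.

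The hard part is therefore to control the circuit decomposition of $(M_{1}\cup M_{3})\bigtriangleup E(D')$: a priori there is no reason that $\alpha$ and $f$ lie on a common circuit, that all $M_{3}$-edges of $D$ lie on that circuit, or that no other edge of $F$ lies on some other circuit. I expect the resolution to require a careful choice of $f$ and, more importantly, the replacement of $E(D')$ by the symmetric difference of $M_{3}$ with a suitable $M_{3}$-alternating path running from $D$ to $f$ along the Hamiltonian circuit $M_{1}\cup M_{3}$, possibly applied iteratively, so as to merge $\alpha$, $f$ and all $M_{3}$-edges of $D$ onto a single circuit while keeping the remaining edges of $F$ off the other circuits. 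Pinning this choice down, and in particular reconciling requirements $(2)$ and $(3)$ simultaneously, is where the bulk of the argument — and most likely a further subdivision into subcases according to the relative positions of $\alpha$, $f$ and the arcs of $M_{1}\cup M_{3}$ — will lie.
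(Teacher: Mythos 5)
Your Case 1 is correct, and in fact handles a strictly larger situation than the corresponding step of the paper's proof: the paper only dispenses with the case $F\subseteq E(C_{1})$ (taking $M_{4}=M_{2}$, $M_{5}=M_{3}$, so that all circuits of the $2$-factor survive and must avoid $F$), whereas your choice $M_{4}=M_{3}$, $M_{5}=M_{3}\bigtriangleup E(D)$ destroys every circuit of $G[M_{2}\cup M_{3}]$ except $D$ and so only needs $F\cap E(D)\neq\emptyset$. That part is fine.

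The problem is Case 2, which is the entire substance of the lemma, and which you explicitly leave as a plan rather than a proof. Two points. First, the specific device you propose --- modifying $M_{5}$ by symmetric differences of $M_{3}$ with $M_{1}$--$M_{3}$ alternating paths along the Hamiltonian circuit $M_{1}\cup M_{3}$ --- cannot work on its own: such swaps only exchange $M_{3}$-edges for $M_{1}$-edges and never introduce any edge of $M_{2}$ into $M_{5}$, so no edge of $F\subseteq M_{2}$ can ever lie on a circuit of $G[M_{1}\cup M_{5}]$ and condition $(2)$ is unreachable by that route. Any workable $M_{5}$ must absorb $M_{2}$-edges, and then you must simultaneously keep $M_{5}$ a perfect matching, keep $\alpha$ and some edge of $F$ on one common circuit of $G[M_{1}\cup M_{5}]$, keep all other surviving $F$-edges off the remaining circuits, and retain $M_{3}\cap E(D)\subseteq M_{5}$ for condition $(3)$; reconciling these is exactly the difficulty, and no amount of case analysis on ``relative positions of $\alpha$, $f$ and the arcs'' is supplied. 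Second, the paper's actual resolution is a genuinely different mechanism that your sketch does not anticipate: an induction on $|V(G)|$. After the easy reductions, it selects a circuit $C_{3}$ of an auxiliary $2$-factor containing an uncovered edge of $F$, contracts the maximal $M_{1}$--$M_{3}$ alternating paths inside and outside $C_{3}$ to single edges, obtains a strictly smaller cubic graph $G'$ with the same structure (two Hamiltonian circuits sharing a common perfect matching), applies the lemma inductively to $G'$ with the projected $F'$ and $\alpha'$, and then expands the contracted edges back into paths to recover $M_{4}$, $M_{5}$, $C$ and $C'$ in $G$. Without that (or an equivalent complete argument), the proof is not done.
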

\begin{proof} We proceed by induction on $|V(G)|$. If $|V(G)|=2$, then $M_{2}$ and $M_{3}$ meet the requirements. So the statement holds for $|V(G)|=2$. Now we suppose $|V(G)|>2$.

Let $C_{1}$ be the circuit containing $\alpha$ in $G[M_{2}\cup M_{3}]$. If $F\subseteq E(C_{1})$, then $M_{2}$ and $M_{3}$ meet the requirements. So we assume $F\backslash E(C_{1})\neq\emptyset$.

Set $E_{1}$:=$(M_{3}\backslash E(C_{1}))\cup(M_{2}\cap E(C_{1}))$. We know that every component of $G-E_{1}$ is a even circuit. Let $C_{2}$ be the circuit containing $\alpha$ in $G-E_{1}$. Let $M_{4}$ and $M_{5}$ be the two edge-disjoint perfect matchings of $G-E_{1}$. We know $M_{4}\cap M_{5}=\emptyset\subseteq M_{1}\subseteq M_{4}\cup M_{5}$, $M_{2}\cap E(C_{2})\cap E(C_{1})=\emptyset$, $M_{3}\backslash(M_{4}\cup M_{5})\subseteq M_{3}\backslash E(C_{1})$ and that $M_{3}\backslash E(C_{1})$ is a perfect matching of $G-V(C_{1})$. If $F\backslash E(C_{1})\subseteq E(C_{2})$, then $M_{4}$ and $M_{5}$ are two perfect matchings of $G$ which meet the requirements. So we assume further $F\backslash(E(C_{1})\cup E(C_{2}))\neq\emptyset$.

Let $C_{3}$ be a circuit in $G-E_{1}$ such that $E(C_{3})\cap(F\backslash(E(C_{1})\cup E(C_{2})))\neq\emptyset$. Set $E_{2}$:=$M_{2}\backslash E(C_{3})$. Let $P_{1,1}$, $P_{1,2}$, $\dots$, $P_{1,t}$ be the (inclusionwise) maximal $M_{1}$-$M_{3}$ alternating paths in $G-E_{2}$ which contain no edges in $C_{3}$. We know $|M_{3}\cap E(P_{1,i})|=|M_{1}\cap E(P_{1,i})|+1$ for each $i\in[t]$. Since $G[M_{1}\cup M_{3}]$ is a hamiltonian circuit of $G$, there is some $s\in[t]$ such that $\alpha\in E(P_{1,s})$. Let $P_{2,1}$, $P_{2,2}$, $\dots$, $P_{2,t}$ be the (inclusionwise) maximal $M_{1}$-$M_{3}$ paths in $C_{3}$. We know $|M_{1}\cap E(P_{2,i})|=|M_{3}\cap E(P_{2,i})|+1$ for each $i\in[t]$. For each $i\in\{1,2\}$ and each $j\in[t]$, let $\beta_{i,j}$ be an edge with the same ends as $P_{i,j}$. Set $M_{6}$:=$\{\beta_{2,j}:j\in[t]\}$, $M_{7}$:=$M_{2}\cap E(C_{3})$ and $M_{8}$:=$\{\beta_{1,j}:j\in[t]\}$. Set $F'$:=$E(C_{3})\cap(F\backslash(E(C_{1})\cup E(C_{2})))$.

We construct a new graph $G'$ with vertex-set $V(G[M_{7}])$ and edge-set $M_{6}\cup M_{7}\cup M_{8}$. From above, we know that $M_{6}\cup M_{7}$ induce a hamiltonian circuit of $G'$. Since $G[M_{1}\cup M_{3}]$ is a hamiltonian circuit of $G$, $M_{6}\cup M_{8}$ induce a hamiltonian circuit of $G'$. As $|V(G')|<|V(G)|$, we know by the induction hypothesis that $G'$ has two perfect matchings $M_{9}$ and $M_{10}$ such that (1) either $M_{9}\cap M_{10}\subseteq M_{8}\subseteq M_{9}\cup M_{10}$ or $M_{9}\cap M_{10}\subseteq M_{6}\subseteq M_{9}\cup M_{10}$, (2) $G'[M_{9}\cup M_{10}]$ has a circuit $C'_{1}$ containing $\beta_{1,s}$ such that $F'\cap E(C'_{1})\neq\emptyset$ and every circuit different from $C'_{1}$ in $G'[M_{9}\cup M_{10}]$ contains no edges in $F'$, and (3) if $M_{6}\subseteq M_{9}\cup M_{10}$, then $G'$ has a circuit $C'_{2}$ containing $\beta_{1,s}$ such that $M_{7}\cap E(C'_{1})\cap E(C'_{2})=\emptyset$, $M_{8}\backslash(M_{9}\cup M_{10})\subseteq M_{8}\backslash E(C'_{2})$ and $M_{8}\backslash E(C'_{2})$ is a perfect matching of $G'-V(C'_{2})$.

Set $E_{3}$:=$\bigcup_{j=1}^{2}(\bigcup_{k\in[t]\ \textrm{s.t.}\ \beta_{j,k}\in M_{9}\bigtriangleup M_{10}}E(P_{j,k}))$. Let $M_{11}$ be $M_{3}$ if $M_{8}\subseteq M_{9}\cup M_{10}$ and be $M_{1}$ if $M_{6}\subseteq M_{9}\cup M_{10}$. Set $M_{12}$:=$E_{3}\bigtriangleup M_{11}$. Noting either $M_{9}\cap M_{10}\subseteq M_{8}\subseteq M_{9}\cup M_{10}$ or $M_{9}\cap M_{10}\subseteq M_{6}\subseteq M_{9}\cup M_{10}$, we have that $M_{12}$ is a perfect matching of $G$ and either $M_{11}\cap M_{12}\subseteq M_{3}\subseteq M_{11}\cup M_{12}$ or $M_{11}\cap M_{12}\subseteq M_{1}\subseteq M_{11}\cup M_{12}$. Let $C_{4}$ be the circuit of $G$ which is obtained from $C'_{1}$ by replacing each edge $\beta_{j,k}$ in $C'_{1}$ by the corresponding path $P_{j,k}$. We can see from the property of $C'_{1}$ that $C_{4}$ is a circuit in $G[M_{11}\cup M_{12}]$ such that $\alpha\in E(C_{4})$, $F\cap E(C_{4})\neq\emptyset$ and every circuit different from $C_{4}$ in $G[M_{11}\cup M_{12}]$ contains no edges in $F$.

Suppose $M_{1}\subseteq M_{11}\cup M_{12}$. We have $M_{6}\subseteq M_{9}\cup M_{10}$. Let $C_{5}$ be the circuit obtained from $C'_{2}$ by replacing each edge $\beta_{j,k}$ in $C'_{2}$ by the corresponding path $P_{j,k}$. As $\beta_{1,s}\in E(C'_{2})$ and $M_{7}\cap E(C'_{1})\cap E(C'_{2})=\emptyset$, we know $\alpha\in E(C_{5})$ and $M_{2}\cap E(C_{4})\cap E(C_{5})=\emptyset$. Since $M_{8}\backslash E(C'_{2})$ is a perfect matching of $G'-V(C'_{2})$, $M_{3}\backslash E(C_{5})$ is a perfect matching of $G-V(C_{5})$. Noting also $M_{9}\cap M_{10}\subseteq M_{6}\subseteq M_{9}\cup M_{10}$ and $M_{8}\backslash(M_{9}\cup M_{10})\subseteq M_{8}\backslash E(C'_{2})$, we have $M_{3}\backslash(M_{11}\cup M_{12})\subseteq M_{3}\backslash E(C_{5})$.


So $M_{11}$, $M_{12}$ are perfect matchings of $G$ which meet the requirements.
\end{proof}

\section{Main results}

In this section, we show that Berge Conjecture holds for a bridgeless cubic graph which has a circuit missing only one vertex or has a 2-factor consisting of two circuit.

\begin{Lemma}\label{3pm} Let $G$ a bridgeless cubic graph with a $2$-factor consisting of two odd circuits $C_{1}$ and $C_{2}$. Let $u_{1}u_{2}$ be
an edge in $G$ with $u_{1}\in V(C_{1})$ and $u_{2}\in V(C_{2})$ and let $M$ be the perfect matching of $G$ such that $u_{1}u_{2}\in M$ and $M\backslash\{u_{1}u_{2}\}\subseteq E(C_{1})\cup E(C_{2})$. For $i=1,2$, let $C_{i+2}$ be the circuit containing $u_{i}$ in $G[E(G)\backslash M]$. Suppose $C_{3}\neq C_{4}$ and that $G$ has a circuit $C$ containing $u_{1}$ such that
\vspace{-0.8em}
\begin{enumerate}[$(1)$]
\addtolength{\itemsep}{-2ex}
\item $(E(C_{1})\backslash M)\backslash E(C)$ is a perfect matching of $C_{1}-(V(C)\cap V(C_{1}))$,

\item $\emptyset\neq E(C)\cap E(C_{2})\subseteq E(C_{2})\backslash M$ and $E(C)\cap E(C_{2})\cap E(C_{4})=\emptyset$, and

\item the paths $Q_{1}$, $Q_{2}$, $\dots$, $Q_{s}$ separated by $E(C)\cap E(C_{2})$ in $C$ satisfy that for each $i\in[s]$, $E(Q_{i})\cap(E(C_{1})\backslash M)$ is a perfect matching of $Q_{i}-(V(Q_{i})\cap V(C_{2}))$ if $u_{1}\notin V(Q_{i})$.
\end{enumerate}
\vspace{-0.8em}
We have that $G$ has $3$ perfect matchings covering all edges in $(E(C_{1})\cup E(C_{2}))\backslash M$.
\end{Lemma}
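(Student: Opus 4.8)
The plan is to contract $G$ to a smaller cubic graph in which the technical Lemma~\ref{2pm} applies, lift the two perfect matchings it produces, and add one more perfect matching obtained directly from $C$.

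First I would extract the structural content of the hypotheses. Condition~(1) forces $C$ to use both edges of $C_1$ at $u_1$: otherwise some edge of $E(C_1)\setminus M$ incident with a vertex of $V(C)\cap V(C_1)$ but not on $C$ would prevent $(E(C_1)\setminus M)\setminus E(C)$ from being a perfect matching of $C_1-(V(C)\cap V(C_1))$. Hence $u_1u_2\notin E(C)$ and $M\cap E(C)\subseteq E(C_1)$. Cutting $C$ along $E(C)\cap E(C_2)$ gives the paths $Q_1,\dots,Q_s$, each using only edges of $E(C_1)$ and chords, with $u_1$ on exactly one of them, say $Q_1$. Then I would build a cubic graph $H$ from $G$ by contracting to single edges, for each $i\neq 1$, the path $Q_i$, and contracting likewise the $C_1$-segments lying off $C$; conditions~(1) and~(3) say exactly that the edges of $(E(C_1)\setminus M)\setminus E(C)$ restrict to perfect matchings of the contracted pieces, so the contraction is well defined on perfect matchings — every perfect matching of $H$ lifts uniquely to one of $G$, and the contracted-away edges of $E(C_1)\setminus M$ belong to every lift.

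In $H$ the image $\widehat C$ of $C$ is a Hamiltonian circuit; write $\widehat C=\widehat M_1\cup\widehat M_2$ for the two perfect matchings of $H$ it decomposes into, with $\widehat M_2$ the class containing the image of a fixed edge $\gamma\in E(C)\cap E(C_2)$. Using condition~(2), and in particular that $E(C)\cap E(C_2)$ misses the circuit $C_4$ of $G-M$ through $u_2$, the circuit structure coming from $E(C_2)\setminus M$ survives contraction as a Hamiltonian circuit $\widehat D$ of $H$ sharing the perfect matching $\widehat M_1$ with $\widehat C$; write $\widehat D=\widehat M_1\cup\widehat M_3$. I would then apply Lemma~\ref{2pm} to $(H;\widehat M_1,\widehat M_2,\widehat M_3)$ with $F$ the singleton image of $\gamma$ and $\alpha$ a suitable edge of $\widehat M_3$ (an image of an edge at $u_1$), obtaining perfect matchings $M_4,M_5$ of $H$; these lift to perfect matchings $P_1,P_2$ of $G$. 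For the third matching $P_3$, I would take the perfect matching of $G$ containing every edge of $E(C)\cap\bigl((E(C_1)\cup E(C_2))\setminus M\bigr)$ except one of the two edges of $C_1$ at $u_1$, extended by condition~(1) on $C_1-V(C)$ and by condition~(3) on the interiors of the $Q_i$, with the two leftover vertices absorbed through chords.

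Finally I would check $P_1\cup P_2\cup P_3\supseteq (E(C_1)\cup E(C_2))\setminus M$: by construction $P_3$ covers all target edges lying on $C$, while Lemma~\ref{2pm}(2) (the special circuit $\bar C$ through $\alpha$ carries $F$ and the others avoid it) together with Lemma~\ref{2pm}(3) in the case $\widehat M_1\subseteq M_4\cup M_5$ (which forces $\widehat M_3\setminus E(C')$ to be a perfect matching of $H-V(C')$, handling the remaining edges of $E(C_2)\setminus M$) pushes every remaining target edge into $P_1$ or $P_2$. The main obstacle is the construction of $H$ and the verification that both $\widehat M_1\cup\widehat M_2$ and $\widehat M_1\cup\widehat M_3$ are Hamiltonian circuits of $H$ — this is precisely where conditions~(1)–(3) are used — together with the somewhat delicate bookkeeping that tracks circuits through the contraction and confirms no target edge is missed.
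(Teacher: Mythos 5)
Your overall strategy --- contract to an auxiliary cubic graph with two Hamiltonian circuits sharing a perfect matching, invoke Lemma~\ref{2pm}, lift, and add one further perfect matching --- is the same as the paper's, but the reduction as you set it up does not satisfy the hypotheses of Lemma~\ref{2pm}. In your graph $H$ the two circuits you contract are $C$ and (the relevant part of) $C_{2}$, so the edges they share are exactly $E(C)\cap E(C_{2})$; for Lemma~\ref{2pm} the shared class must be the matching $M_{1}$ of that lemma, while $F$ must lie in $M_{2}$. You place the image of $\gamma\in E(C)\cap E(C_{2})$ in $\widehat{M}_{2}$, which forces the shared matching $\widehat{M}_{1}$ to be the images of the paths $Q_{i}$; but then $\widehat{M}_{1}\cup\widehat{M}_{3}$ is the image of $E(C)\bigtriangleup E(C_{2})$, which is in general a \emph{disjoint union} of circuits, not a Hamiltonian circuit of $H$. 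This is precisely the obstruction the paper's proof is organized around: it first forms $C_{5}$, the circuit of $G[E(C)\bigtriangleup E(C_{2})]$ through $u_{1}$, splits into cases according to whether $u_{2}\in V(C_{5})$ (in the affirmative case no appeal to Lemma~\ref{2pm} is needed --- the symmetric difference decomposes into even circuits and condition $(2)$, via $C_{4}$, supplies the third matching containing $E(C)\cap E(C_{2})$), and in the remaining case builds the auxiliary graph on the segments $E(C_{5})\cap E(C_{2})$, with the two Hamiltonian circuits being the images of $C_{5}$ and of $C_{2}$. Your proposal never isolates $C_{5}$ and never uses the hypothesis $E(C)\cap E(C_{2})\cap E(C_{4})=\emptyset$ in a concrete way, so the key structural step is missing.

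A second, independent gap is the lifting through the contracted path $Q_{1}$ containing $u_{1}$. Condition $(3)$ deliberately exempts the $Q_{i}$ with $u_{1}\in V(Q_{i})$, and nothing in the hypotheses guarantees that $Q_{1}$ admits a perfect matching of its interior, or of its full vertex set, inside $E(Q_{1})$: for instance $Q_{1}$ may be a path $v_{0}v_{1}u_{1}v_{3}v_{4}$ whose first and last edges are chords and whose middle two edges are the two $C_{1}$-edges at $u_{1}$, and then its interior has three vertices and cannot be matched within $Q_{1}$. Hence your assertion that ``every perfect matching of $H$ lifts uniquely to one of $G$'' fails, and the same issue undermines the construction of $P_{3}$ (``the two leftover vertices absorbed through chords'' does not account for the interior of $Q_{1}$ or for the free endpoint of the discarded $C_{1}$-edge). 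The paper avoids this by lifting globally rather than edge-by-edge --- it takes the union $E_{1}$ of all paths whose images lie in $F_{1}\bigtriangleup F_{2}$, shows $G[E_{1}]$ is a disjoint union of even circuits, and two-colours it --- and by choosing $F$ and $\alpha$ in Lemma~\ref{2pm} to be the images of the paths through $u_{1}$ and $u_{2}$, so that conclusion $(2)$ of that lemma forces these two paths onto the same circuit of the lifted structure. You would need to add both of these devices to make the argument go through.
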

\begin{proof} Set $M_{1}$:=$(E(C_{1})\cup E(C_{2}))\backslash M$, $M_{2}$:=$M$ and $M_{3}$:=$E(G)\backslash(E(C_{1})\cup E(C_{2}))$. Let $C_{5}$ be the circuit containing $u_{1}$ in $G[E(C)\bigtriangleup E(C_{2})]$. From the properties (1) and (3) of $C$, we know that $(E(C_{1})\cap M_{1})\backslash E(C_{5})$ is a perfect matching of $C_{1}-(V(C_{5})\cap V(C_{1}))$.

Assume $u_{2}\in V(C_{5})$. Then every component of $G[E(C)\bigtriangleup E(C_{2})]$ is an even circuit. So $E(C)\bigtriangleup E(C_{2})$ can be decomposed into two matchings $N_{1}$ and $N_{2}$ of $G$. For $i=4,5$, set $M_{i}$:=$N_{i-3}\cup((E(C_{1})\cap M_{1})\backslash E(C))$. From the property (1) of $C$, we can know that $M_{4}$ and $M_{5}$ are perfect matchings of $G$ and we can see $M_{1}\backslash(M_{4}\cup M_{5})=E(C)\cap E(C_{2})$. So it suffice to show that $E(C)\cap E(C_{2})$ is contained in a perfect matching of $G$. On the other hand, it is easy to see that $G[\{u_{1}u_{2}\}\cup E(C_{1})\cup E(C_{4})]$ has a perfect matching, say $N_{3}$. Noting $E(C)\cap E(C_{2})\cap E(C_{4})=\emptyset$, we have that $N_{3}\cup((E(C_{2})\cap M_{1})\backslash E(C_{4}))$ is a perfect matching of $G$ which contains $E(C)\cap E(C_{2})$.


Next we assume $u_{2}\notin V(C_{5})$. Let $P_{1,1}$, $P_{1,2}$, $\dots$, $P_{1,t}$ be the components of $G[E(C_{5})\cap E(C_{2})]$. We know that for each $i\in[t]$, $P_{1,i}$ is a $M_{1}$-$M_{2}$ alternating path satisfying $|E(P_{1,i})\cap M_{2}|=|E(P_{1,i})\cap M_{1}|+1$. For $i=2,3$, let $P_{i,1}$, $P_{i,2}$, $\dots$, $P_{i,t}$ be the paths in $C_{11-3i}$ which are separated by $P_{1,1}$, $P_{1,2}$, $\dots$, $P_{1,t}$. We may assume $u_{1}\in V(P_{2,1})$ and $u_{2}\in V(P_{3,1})$. We know $P_{2,j}\in\{Q_{1},Q_{2},\dots,Q_{s}\}$ for each $j\in[t]$. For each $i\in\{1,2,3\}$ and each $j\in[t]$, let $\alpha_{i,j}$ be an edge with the same ends as $P_{i,j}$. Set $A_{i}$:=$\{\alpha_{i,j}:j\in[t]\}$ for $i=1,2,3$.

We construct a new graph $G'$ whose vertex-set consists of the ends of edges in $A_{1}$ and edge-set is $A_{1}\cup A_{2}\cup A_{3}$. We know that both $A_{1}\cup A_{2}$ and $A_{1}\cup A_{3}$ induce hamiltonian circuits of $G'$. For $\alpha_{2,1}\in A_{2}$ and $\alpha_{3,1}\in A_{3}$, we have by Lemma \ref{2pm} that $G'$ has two perfect matchings $F_{1}$ and $F_{2}$ such that (1) either $F_{1}\cap F_{2}\subseteq A_{3}\subseteq F_{1}\cup F_{2}$ or $F_{1}\cap F_{2}\subseteq A_{1}\subseteq F_{1}\cup F_{2}$, (2) $G'[F_{1}\cup F_{2}]$ has a circuit $C'_{1}$ containing $\alpha_{3,1}$ and $\alpha_{2,1}$, and (3) if $A_{1}\subseteq F_{1}\cup F_{2}$, then $G'$ has a circuit $C'_{2}$ containing $\alpha_{3,1}$ such that $A_{2}\cap E(C'_{1})\cap E(C'_{2})=\emptyset$, $A_{3}\backslash(F_{1}\cup F_{2})\subseteq A_{3}\backslash E(C'_{2})$ and $A_{3}\backslash E(C'_{2})$ is a perfect matching of $G'-V(C'_{2})$.

Set $E_{1}$:=$\bigcup_{i=1}^{3}(\bigcup_{j\in[t]\ \textrm{s.t.}\ \alpha_{i,j}\in F_{1}\bigtriangleup F_{2}}E(P_{i,j}))$. From above, we can obtain that every component of $G[E_{1}]$ is an even circuit of $G$. Hence $E_{1}$ can be decomposed into two matchings $N_{4}$ and $N_{5}$ of $G$.

Assume $F_{1}\cap F_{2}\subseteq A_{3}\subseteq F_{1}\cup F_{2}$. We have that $A_{3}\cap(F_{1}\bigtriangleup F_{2})$ is a perfect matching of $G'[F_{1}\bigtriangleup F_{2}]$. It follows that $M_{1}\backslash E_{1}$ is a perfect matching of $G-V(G[E_{1}])$. Hence $(M_{1}\backslash E_{1})\cup N_{4}$ and $(M_{1}\backslash E_{1})\cup N_{5}$ are two perfect matchings of $G$ which cover all edges in $M_{1}$


Next we assume $F_{1}\cap F_{2}\subseteq A_{1}\subseteq F_{1}\cup F_{2}$. Set $E_{2}$:=$(\bigcup_{j\in[t]\ \textrm{s.t.}\ \alpha_{1,j}\in F_{1}\cap F_{2}}E(P_{1,j}))\cup(\bigcup_{j\in[t]\ \textrm{s.t.}\ \alpha_{3,j}\in A_{3}\backslash(F_{1}\cup F_{2})}E(P_{3,j}))$. We know $E_{2}\subseteq M_{1}\cup M_{2}$. For $i=6,7$, set $M_{i}$:=$N_{i-2}\cup(E_{2}\cap M_{2})\cup((M_{1}\cap E(C_{1}))\backslash E_{1})$. We can see that $M_{6}$ and $M_{7}$ are perfect matchings of $G$ and we have $M_{1}\backslash(M_{6}\cup M_{7})=E_{2}\cap M_{1}$.

Now we show that $E_{2}\cap M_{1}$ is contained in a perfect matching of $G$. Let $C_{6}$ be the circuit of $G$ which is obtained from $C'_{2}$ by replacing each edge $\alpha_{i,j}$ in $C'_{2}$ by the corresponding path $P_{i,j}$. Noting $\alpha_{2,1}\in E(C'_{1})$, $\alpha_{3,1}\in E(C'_{2})$ and $A_{2}\cap E(C'_{1})\cap E(C'_{2})=\emptyset$, we have $u_{2}\in V(C_{6})$ and $u_{1}\notin V(C_{6})$. Notice that $A_{3}\backslash(F_{1}\cup F_{2})\subseteq A_{3}\backslash E(C'_{2})$ and $A_{3}\backslash E(C'_{2})$ is a perfect matching of $G'-V(C'_{2})$. It follows that $E_{2}\cap M_{1}\subseteq(M_{1}\cap E(C_{2}))\backslash E(C_{6})$, $(M_{1}\cap E(C_{2}))\backslash E(C_{6})$ is a perfect matching of $C_{2}-(V(C_{6})\cap V(C_{2}))$ and $E(C_{6})\backslash M_{1}$ is the perfect matching of $C_{6}-u_{2}$. If $E(C_{6})\cap E(C_{1})=\emptyset$, then $(M_{2}\backslash E(C_{2}))\cup((M_{1}\cap E(C_{2}))\bigtriangleup E(C_{6}))$ is a perfect matching containing $E_{2}\cap M_{1}$ in $G$. So we assume $E(C_{6})\cap E(C_{1})\neq\emptyset$. Then there is a path $T$ from $u_{2}$ to $V(C_{1})$ in $C_{6}$ such that $|V(T)\cap V(C_{1})|=1$. Let $N_{6}$ be the perfect matching of $C_{1}-(V(T)\cap V(C_{1}))$. We know that $N_{6}\cup((M_{1}\cap E(C_{2}))\bigtriangleup E(T))$ is a perfect matching containing $E_{2}\cap M_{1}$ in $G$.

So the edges in $M_{1}$ can be covered by 3 perfect matchings of $G$.
\end{proof}

\begin{thm}\label{5pm2} Let $G$ be a cubic graph. Suppose that $G$ has a vertex $v$ such that $G-v$ has a hamiltonian circuit. Then $G$ has a perfect matching cover of order $5$.
\end{thm}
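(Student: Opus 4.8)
The plan is to reduce the theorem to Lemma~\ref{3pm}. I would first make the standard reductions: we may assume $G$ is connected, and if $G$ is $3$-edge-colorable it has three edge-disjoint perfect matchings, i.e.\ a perfect matching cover of order $3$, and we are done; so assume $G$ is not $3$-edge-colorable. Since $G-v$ contains a spanning cycle it is $2$-edge-connected, and adjoining the vertex $v$ by its three edges cannot create a bridge, so $G$ is bridgeless and $n:=|V(G)|$ is even. Let $H$ be a Hamiltonian circuit of $G-v$ and $v_1,v_2,v_3$ the neighbours of $v$; then $H$ has odd length $n-1$ and is split by $v_1,v_2,v_3$ into three arcs, and no two of $v_1,v_2,v_3$ are consecutive on $H$, since otherwise $(H-v_iv_j)+vv_i+vv_j$ would be a spanning cycle of $G$ and $G$ would be $3$-edge-colorable.

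Next I would produce a $2$-factor of $G$ consisting of two odd circuits $C_1,C_2$, together with an edge $u_1u_2$ joining them. Write $M':=E(G-v)\setminus E(H)$, a matching covering $V(G)\setminus\{v,v_1,v_2,v_3\}$. A $2$-factor of $G$ containing $vv_1,vv_2$ (but not $vv_3$) is obtained by choosing a subset $S\subseteq M'$, deleting from $H$ one edge at each vertex of $\{v_1,v_2\}\cup V(S)$, and keeping $S$ and $vv_1,vv_2$; the resulting edge set has all degrees $2$ precisely when the deleted edges form a perfect matching of $\{v_1,v_2\}\cup V(S)$ using edges of $H$. The Hamiltonicity of $G-v$ guarantees enough $M'$-edges joining the arcs of $H$ to carry this out so that the outcome is connected into exactly two circuits, and a further parity adjustment (rerouting along one more $M'$-edge, or shifting the deleted $H$-edges, using that $n$ is even) makes both circuits odd. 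An edge $u_1u_2$ with $u_1\in V(C_1)$, $u_2\in V(C_2)$ exists because $G$ is connected.

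It then remains to verify the hypotheses of Lemma~\ref{3pm} for this $2$-factor and a suitable circuit $C$. With $M$ the perfect matching determined by $u_1u_2\in M$, $M\setminus\{u_1u_2\}\subseteq E(C_1)\cup E(C_2)$, and $C_3,C_4$ the circuits of $u_1,u_2$ in $G-M$, the condition $C_3\neq C_4$ should follow from the way $u_1u_2$ was selected among the edges between $C_1$ and $C_2$. For the circuit $C$ through $u_1$ satisfying conditions (1)--(3) of Lemma~\ref{3pm} I would use (a sub-walk of, suitably closed up) the Hamiltonian circuit $H$ of $G-v$: because $H$ misses only $v$ and runs through essentially all of $C_1\cup C_2$, its interaction with the near-perfect matchings $M\cap E(C_1)$ and $M\cap E(C_2)$ of the odd circuits yields the alternating-path and perfect-matching requirements (1) and (3), while (2) just records that $C$ enters $C_2$ only along non-$M$ edges of $C_2$. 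Granting Lemma~\ref{3pm}, it produces three perfect matchings covering $(E(C_1)\cup E(C_2))\setminus M$; adjoining $M$ itself and the perfect matching $E(G)\setminus(E(C_1)\cup E(C_2))$ (the complement of the $2$-factor) gives five perfect matchings whose union is $\bigl((E(C_1)\cup E(C_2))\setminus M\bigr)\cup M\cup\bigl(E(G)\setminus(E(C_1)\cup E(C_2))\bigr)=E(G)$, which is the desired perfect matching cover of order $5$.

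I expect the main obstacle to be the middle step, carried out so that everything fits together at once: one must choose the $2$-factor $\{C_1,C_2\}$, the crossing edge $u_1u_2$, and the circuit $C$ simultaneously so that both circuits are odd, $C_3\neq C_4$, and all three bookkeeping conditions on $C$ hold, and it is precisely here that one needs the full hypothesis ``$G-v$ is Hamiltonian'' rather than merely ``$G$ has a $2$-factor of two odd circuits''. A short case analysis on the cyclic arrangement of $v_1,v_2,v_3$ along $H$ and the relevant $M'$-edges between the arcs is likely the cleanest way to handle this.
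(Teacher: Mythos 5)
Your high-level plan (reduce to Lemma~\ref{3pm}, then add $M$ and the complementary perfect matching of the $2$-factor to get five matchings) is the same as the paper's, but the decisive middle step is a genuine gap. You assert that $G$ itself admits a $2$-factor consisting of \emph{exactly two odd circuits}, built from $H$, the two spokes $vv_1,vv_2$ and a suitable set $S$ of chords, with ``the Hamiltonicity of $G-v$'' guaranteeing that the construction can be completed into exactly two components and a ``parity adjustment'' making both odd. No argument is given for either claim, and neither is routine: a cubic graph with $G-v$ hamiltonian need not have \emph{any} $2$-factor with exactly two components ($K_4$ is an example, harmless only because it is $3$-edge-colorable), so any correct version of your construction would have to use non-$3$-edge-colorability in an essential way, and you do not indicate how. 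You yourself flag this step as ``the main obstacle''; it is, and it is not resolved. A second, related gap: even granted such a $2$-factor, you still must exhibit $C_3\neq C_4$ and a circuit $C$ satisfying conditions (1)--(3) of Lemma~\ref{3pm}; ``a sub-walk of $H$, suitably closed up'' is not a construction, and the proof of Theorem~\ref{cover} shows that in the absence of extra structure this verification is where most of the work lives (Claims 1 and 2 there).

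The paper avoids both difficulties by not asking for a two-circuit $2$-factor of $G$ at all. It takes $N_1$ to be the perfect matching of $C-u$ (where $C$ is the hamiltonian circuit of $G-v$ and $uv\in E(G)$) and sets $M_1:=N_1\cup\{uv\}$; the complementary $2$-factor $G-M_1$ then automatically has exactly two odd circuits $C_1\ni u$ and $C_2\ni v$ (every other component alternates between $E(C)\setminus N_1$ and chords, hence is even), though possibly many even ones. It then contracts each maximal segment $P_i$ of $C$ avoiding $E(C_1)\cup E(C_2)$ to a single edge $\alpha_i$, producing an auxiliary cubic graph $G'$ on $V(C_1)\cup V(C_2)$ in which $E(C_1)\cup E(C_2)$ \emph{is} a $2$-factor with exactly two circuits; Lemma~\ref{3pm} is applied in $G'$, where the fact that every chord other than $uv$ comes from the hamiltonian circuit is precisely what makes $C_3$ satisfy hypotheses (1)--(3), and the resulting perfect matchings are lifted back to $G$ along the paths $P_i$. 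If you want to salvage your approach, this contraction is the missing idea: work with the two odd circuits of $G-M_1$ inside the contracted graph rather than trying to force a two-circuit $2$-factor in $G$ itself.
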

\begin{proof} Let $C$ be a hamiltonian circuit in $G-v$. Choose a vertex $u$ in $V(C)$ such that $uv\in E(G)$. Let $N_{1}$ be the perfect matching of $C-u$. Set $N_{2}$:=$E(C)\backslash N_{1}$. Let $C_{1}$ ($C_{2}$) be the circuit containing $u$ ($v$) in $G[E(G)\backslash(N_{1}\cup\{uv\})]$. If $C_{1}=C_{2}$, then every circuit in $G[E(G)\backslash(N_{1}\cup\{uv\})]$ has even length, which implies that $G$ is 3-edge-colorable and the statement holds. So we assume $C_{1}\neq C_{2}$. Set $M_{1}$:=$N_{1}\cup\{uv\}$ and $M_{2}$:=$(E(C_{1})\backslash E(C))\cup(E(C_{2})\cap E(C))\cup(E(G)\backslash(E(C)\cup E(C_{1})\cup E(C_{2})))$. We know that $M_{1}$ and $M_{2}$ be two perfect matchings of $G$.

Let $P_{1}$, $P_{2}$, $\dots$, $P_{t}$ be the paths of length at least 1 in $C$, which are separated by $(E(C_{1})\cup E(C_{2}))\cap E(C)$. For each $i\in[t]$, we know that $P_{i}$ is a $N_{1}$-$N_{2}$ alternating path satisfying $|E(P_{i})\cap N_{1}|=|E(P_{i})\cap N_{2}|+1$. For each $i\in[t]$, let $\alpha_{i}$ be an edge with the same ends as $P_{i}$. Let $G'$ be a new graph with vertex-set $V(C_{1})\cup V(C_{2})$ and edge-set $\{uv\}\cup E(C_{1})\cup E(C_{2})\cup \{\alpha_{i}:i\in[t]\}$. We know that $G'$ is a bridgeless cubic graph and $E(C_{1})\cup E(C_{2})$ induces a 2-factor of $G'$. Let $M'$ be the perfect matching of $G'$ such that $uv\in M'$ and $M'\backslash\{uv\}\subseteq E(C_{1})\cup E(C_{2})$. Let $C_{3}$ ($C_{4}$) be the circuit containing $u$ ($v$) in $G'[E(G')\backslash M']$.

Assume $C_{3}=C_{4}$. It implies that $G-M_{2}$ is a 2-factor of $G$ which contains no odd circuits. Hence $G$ is 3-edge-colorable and the statement holds.


Assume $C_{3}\neq C_{4}$. Noting $E(C)\cap E(C_{i})\neq\emptyset$ for $i=1,2$, we have that $G'[(E(C)\cap E(C_{1}))\cup\{\alpha_{i}:i\in[t]\}]$ contains no circuits, which implies $E(C_{3})\cap E(C_{2})\neq\emptyset$. We can see easily that for the perfect matching $M'$ of $G'$, $C_{3}$ is a circuit meeting the requirements (1)-(3) in Lemma \ref{3pm}. By Lemma \ref{3pm}, $G'$ has 3 perfect matchings $M'_{3}$, $M'_{4}$ and $M'_{5}$ which cover all edges in $(E(C_{1})\cup E(C_{2}))\backslash M'$. We know $\{\alpha_{i}:i\in[t]\}\cap M'_{3}\cap M'_{4}\cap M'_{5}=\emptyset$. For $i=3,4,5$, set $M_{i}$:=$(\bigcup_{j\in[t]\ \textrm{s.t.}\ \alpha_{j}\in M'_{i}}(E(P_{j})\cap N_{1}))$ $\cup$ $(\bigcup_{j\in[t]\ \textrm{s.t.}\ \alpha_{j}\notin M'_{i}}(E(P_{j})\cap N_{2}))$ $\cup$ $(M'_{i}\backslash\{\alpha_{j}:j\in[t]\})$. We have that $M_{1}$, $M_{2}$, $M_{3}$, $M_{4}$ and $M_{5}$ are 5 perfect matchings of $G$ which cover all edges of $G$.
\end{proof}

By Theorem \ref{5pm2}, we can obtain immediately that Berge Conjecture holds for cubic hypohamiltonian graphs.

\begin{cor} If $G$ is a hypohamiltonian cubic graph, then $G$ has a perfect matching cover of order $5$.
\end{cor}

\begin{thm}\label{cover} Let $G$ be a bridgeless cubic graph with a $2$-factor consisting of two circuits. Then $G$ has a perfect matching cover of order $5$.
\end{thm}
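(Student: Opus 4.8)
\textbf{Proof proposal for Theorem \ref{cover}.}

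The plan is to reduce to Lemma \ref{3pm} in essentially the same spirit as the proof of Theorem \ref{5pm2}, but handling the 2-factor directly rather than through an auxiliary vertex deletion. Let $C_1$ and $C_2$ be the two circuits forming the given 2-factor, and let $M$ be the perfect matching $E(G)\backslash(E(C_1)\cup E(C_2))$. If $C_1$ and $C_2$ are both even, then $G$ is 3-edge-colorable and we are done, so I would assume at least one of them, say $C_1$, is odd; since $G$ is cubic the number of odd circuits in a 2-factor is even, so in fact both $C_1$ and $C_2$ are odd, which is exactly the hypothesis of Lemma \ref{3pm}. Pick any edge $u_1u_2\in M$ with $u_1\in V(C_1)$, $u_2\in V(C_2)$ (such an edge exists because $G$ is connected — otherwise $C_1$ would be a component). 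For $i=1,2$ let $C_{i+2}$ be the circuit of $G[E(G)\backslash M]$ through $u_i$; since $E(G)\backslash M = E(C_1)\cup E(C_2)$ we simply have $C_3=C_1$ and $C_4=C_2$, so $C_3\neq C_4$ holds automatically.

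The core of the argument is then to produce a circuit $C$ through $u_1$ satisfying conditions (1)–(3) of Lemma \ref{3pm}. Here I would try the most direct choice $C:=C_1$. With $C=C_1$: condition (1) asks that $(E(C_1)\backslash M)\backslash E(C_1)$ be a perfect matching of $C_1-(V(C_1)\cap V(C_1))$, i.e. a perfect matching of the empty graph, which holds vacuously. Condition (2) requires $\emptyset\neq E(C_1)\cap E(C_2)\subseteq E(C_2)\backslash M$; but $E(C_1)\cap E(C_2)=\emptyset$ since $C_1,C_2$ are vertex-disjoint circuits of a 2-factor, so this fails. This tells me $C=C_1$ is too small and I must instead route $C$ through some edges of $M$ so that it genuinely meets $C_2$; the natural candidate is to take $C$ to be a circuit using $u_1u_2$, a segment of $C_2$, another edge of $M$ back to $C_1$, and a segment of $C_1$ back to $u_1$ — in short, a circuit formed by two $M$-edges and two arcs, one in each $C_i$. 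Condition (2)'s clause $E(C)\cap E(C_2)\cap E(C_4)=\emptyset$ is then automatic since $C_4=C_2$ would make this clause read $E(C)\cap E(C_2)=\emptyset$ — so I have misread something and must be more careful: (2) cannot be satisfied when $C_4=C_2$ unless $E(C)\cap E(C_2)=\emptyset$, contradicting the first clause of (2). The resolution must be that one should not take the trivial matching $M$ but rather choose $M$, $u_1$, $u_2$ so that $C_3\neq C_4$ in the sense of Lemma \ref{3pm} with a nontrivial coincidence; concretely, I would reexamine which perfect matching to feed into Lemma \ref{3pm}. Re-reading Lemma \ref{3pm}, its $M$ is forced: $M$ is the perfect matching with $u_1u_2\in M$ and $M\backslash\{u_1u_2\}\subseteq E(C_1)\cup E(C_2)$, which for our 2-factor and any choice of $u_1u_2\in E(G)\backslash(E(C_1)\cup E(C_2))$ is again exactly the complement of the 2-factor. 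So indeed $C_3=C_1$, $C_4=C_2$, and I need a circuit $C$ through $u_1$ with $\emptyset\neq E(C)\cap E(C_2)\subseteq E(C_2)\backslash M$ and $E(C)\cap E(C_2)\cap E(C_2)=\emptyset$ — impossible. Hence the 2-factor itself is \emph{not} the right input: I should instead apply Lemma \ref{3pm} to a \emph{different} 2-factor of $G$ consisting of two odd circuits, one obtained after swapping along an alternating circuit, so that the two relevant circuits in the $M$-complement are distinct from the new $C_1,C_2$.

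So the real plan is: (a) start from the given 2-factor $C_1\cup C_2$ (both odd) with complementary matching $M_0$; (b) pick $u_1u_2\in M_0$ and a second edge $e\in M_0$ joining $V(C_1)$ to $V(C_2)$ (exists by bridgelessness — if every $M_0$-edge other than $u_1u_2$ stayed inside one $C_i$, then $u_1u_2$ would be a bridge), forming a circuit $C$ via $u_1u_2$, an arc of $C_2$, $e$, and an arc of $C_1$; (c) perform the $C$-swap to obtain a new perfect matching $M = M_0 \bigtriangleup E(C)$ and note $E(G)\backslash M$ is a new 2-factor whose circuits $C_3, C_4$ through $u_1,u_2$ are now genuinely different from $C_1,C_2$ and from each other (this is where the choice of $e$ and the arcs must be made carefully, possibly choosing $e$ adjacent to $u_1$ along $C_1$ so the swap is local and $C_3\neq C_4$ is transparent); (d) verify conditions (1)–(3) of Lemma \ref{3pm} for this $C$, which are local statements about how $C$ meets $C_1$ and $C_2$ and should follow because $C$ uses an \emph{arc} of each $C_i$ together with matching edges; (e) invoke Lemma \ref{3pm} to get $3$ perfect matchings covering $(E(C_1)\cup E(C_2))\backslash M = M_0 \bigtriangleup (E(C)\backslash M)$; and (f) add $M$ and $M_0$ (or $M$ and one more matching covering the few remaining edges, exactly as $M_1, M_2$ are adjoined in Theorem \ref{5pm2}) to reach a cover of order $5$. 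The main obstacle I anticipate is step (c)–(d): arranging the swap circuit $C$ so that simultaneously $C_3\neq C_4$, the intersection pattern $E(C)\cap E(C_2)\cap E(C_4)=\emptyset$ holds, and the "perfect matching on the leftover" conditions (1) and (3) of Lemma \ref{3pm} are met; this is a delicate combinatorial bookkeeping problem about how a single alternating circuit can be threaded through the two circuits of the 2-factor, and the edge cases (short circuits, $C$ meeting $C_1$ or $C_2$ in very few vertices) will need separate easy treatment, with the degenerate case $C_3=C_4$ again forcing $G$ to be $3$-edge-colorable.
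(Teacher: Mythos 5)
Your proposal founders on a misreading of Lemma \ref{3pm} that then propagates through the whole plan. In Lemma \ref{3pm} the matching $M$ satisfies $M\backslash\{u_{1}u_{2}\}\subseteq E(C_{1})\cup E(C_{2})$, i.e.\ every edge of $M$ other than $u_{1}u_{2}$ lies \emph{on} the two circuits; so $M$ is $\{u_{1}u_{2}\}$ together with the near-perfect matchings of $C_{1}-u_{1}$ and $C_{2}-u_{2}$, \emph{not} the complement of the 2-factor (whose edges lie entirely outside $E(C_{1})\cup E(C_{2})$ and hence violate that containment). Consequently $G[E(G)\backslash M]$ is the 2-factor consisting of all the cross edges except $u_{1}u_{2}$ together with $(E(C_{1})\cup E(C_{2}))\backslash M$, and its circuits $C_{3}$, $C_{4}$ through $u_{1}$, $u_{2}$ are in general quite different from $C_{1}$ and $C_{2}$. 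Your conclusion that ``$C_{3}=C_{1}$, $C_{4}=C_{2}$, hence condition (2) is impossible'' is therefore false, and the elaborate detour you then propose (re-choosing the 2-factor by swapping along an alternating circuit) is solving a problem that does not exist. This is exactly how the paper proceeds: it takes $M_{2}$ to be this internal matching, and when $E(C_{3})\cap E(C_{2})\neq\emptyset$ the circuit $C_{3}$ itself already satisfies conditions (1)--(3) of Lemma \ref{3pm}, giving three perfect matchings covering $(E(C_{1})\cup E(C_{2}))\backslash M_{2}$, which together with $M_{2}$ and the cross matching $M_{3}$ cover $G$.

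The second, and larger, gap is that even with the correct reading, the case you defer as ``delicate combinatorial bookkeeping'' is the actual substance of the theorem. When $E(C_{3})\cap E(C_{2})=\emptyset$ and symmetrically $E(C_{4})\cap E(C_{1})=\emptyset$, no circuit of $G[E(G)\backslash M_{2}]$ through $u_{1}$ works directly, and the paper must: locate a third circuit $C_{5}$ of the complementary 2-factor meeting both $C_{1}$ and $C_{2}$ (using $|M_{3}|\geq 3$ from bridgelessness); prove Claim 1 producing a perfect matching through most of $M_{1}\cap E(C_{1})$; prove Claim 2, a five-condition sufficient criterion involving a second auxiliary graph $G_{1}$ and another invocation of Lemma \ref{2pm}; and finally build the circuits $C_{8}$ and $C_{9}$ via yet another auxiliary graph $G_{2}$ and a third application of Lemma \ref{2pm}, splitting into two subcases according to which alternative of Lemma \ref{2pm}(1) occurs. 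None of this is sketched or even anticipated in your proposal beyond the acknowledgement that edge cases ``will need separate easy treatment''---they are not easy, and without them the proof is incomplete.
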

\begin{proof} We know that $G$ has two vertex-disjoint circuits $C_{1}$ and $C_{2}$ such that $V(G)=V(C_{1})\cup V(C_{2})$. If both $C_{1}$ and $C_{2}$ have even lengths, then $G$ is 3-edge-colorable and the statement holds.
So we assume that both $C_{1}$ and $C_{2}$ have odd lengths. Choose
an edge $u_{1}u_{2}\in E(G)$ with $u_{1}\in V(C_{1})$ and $u_{2}\in
V(C_{2})$. Set $M_{3}$:=$E(G)\backslash(E(C_{1})\cup E(C_{2}))$ and let $M_{2}$ be the perfect matching of
$G$ such that $M_{2}\cap M_{3}=\{u_{1}u_{2}\}$. Set
$M_{1}$:=$(E(C_{1})\cup E(C_{2}))\backslash M_{2}$. For $i=1,2$, let $C_{i+2}$ be the circuit containing $u_{i}$ in $G[E(G)\backslash M_{2}]$. If $C_{3}=C_{4}$, then $G$ is 3-edge-colorable and the statement holds. So we assume further $C_{3}\neq C_{4}$.

Assume $E(C_{3})\cap E(C_{2})\neq\emptyset$. We can see that for the perfect matching $M_{2}$ of $G$, $C_{3}$ meets the requirements (1)-(3) in Lemma \ref{3pm}. By Lemma \ref{3pm}, the edges in $M_{1}$ can be covered by 3 perfect matchings of $G$, which together with $M_{2}$ and $M_{3}$ cover all edges of $G$.

So we assume $E(C_{3})\cap E(C_{2})=\emptyset$. Similarly, we can also assume $E(C_{1})\cap E(C_{4})=\emptyset$.

Since $G$ is bridgeless, we know $|M_{3}|\geq3$. It follows that there is a circuit $C_{5}$ in $G[E(G)\backslash M_{2}]$ such that $E(C_{5})\cap E(C_{1})\neq\emptyset$ and $E(C_{5})\cap E(C_{2})\neq\emptyset$. We know $V(C_{5})\cap V(C_{i})=\emptyset$ for $i=3,4$. Let $Q$ be the (inclusionwise) maximal path containing $u_{1}$ in $C_{1}$ such that $E(Q)\cap E(C_{5})=\emptyset$.

\vskip 2mm

\noindent \textbf{Claim 1.} \emph{$G$ has a perfect matching containing $(M_{1}\cap E(C_{1}))\backslash(E(Q)\cup E(C_{5}))$.}

\vskip 2mm

Set $E_{1}$:=$(M_{1}\cap E(C_{1}))\backslash(E(Q)\cup E(C_{5}))$. Let $u_{3}$ and $u_{4}$ be the ends of $Q$. For $i=1,2$, let $\beta_{i}$ be the edge incident to $u_{i+2}$ in $C_{5}$. For $i=1,2$, let $T_{i}$ be the path from $u_{i+2}$ to $V(C_{2})\cup\{u_{5-i}\}$ in $C_{5}$ such that $\beta_{i}\in E(T_{i})$ and $|V(T_{i})\cap(V(C_{2})\cup\{u_{5-i}\})|=1$. For $i=1,2$, let $u_{i+4}$ be the end of $T_{i}$ which is different from $u_{i+2}$.

Assume $u_{5}\in V(C_{2})$ or $u_{6}\in V(C_{2})$. Without loss of generality, we assume $u_{5}\in V(C_{2})$. Let $T_{3}$ be the path from $u_{1}$ to $u_{3}$ in $Q$. Let $N_{1}$ be the perfect matching of $C_{2}-u_{5}$. Then $((M_{1}\cap E(C_{1}))\bigtriangleup(E(T_{1})\cup E(T_{3})))\cup N_{1}$ is a perfect matching containing $E_{1}$ in $G$.

Assume $u_{5}=u_{4}$. If $\beta_{2}\in E(T_{1})$, then $(M_{2}\backslash E(C_{1}))\cup((M_{1}\cap E(C_{1}))\bigtriangleup(E(Q)\cup E(T_{1})))$ is a perfect matching containing $E_{1}$ in $G$. So we assume $\beta_{2}\notin E(T_{2})$. Noting $E(C_{5})\cap E(C_{2})\neq\emptyset$, we have $u_{6}\in V(C_{2})$. This returns to the case we have discussed in the previous paragraph. Claim 1 is proved.

\vskip 2mm

In the following proof, if $P_{i,j}$ is a path of $G$, then let $\alpha_{i,j}$ be an edge with the same ends as $P_{i,j}$.

\vskip 2mm

\noindent \textbf{Claim 2.} \emph{If $G$ has two circuits $C$ and $C'$ such that}
\vspace{-0.8em}
\begin{enumerate}[(1)]
\addtolength{\itemsep}{-2ex}
\item \emph{$u_{1}\in V(C)\cap V(C')$,}
\item \emph{$\emptyset\neq E(C)\cap E(C_{2})\subseteq E(C_{5})\cap E(C_{2})$, $E(C')\cap E(C_{2})\subseteq E(C_{5})\cap E(C_{2})$ and $E(C)\cap E(C')\cap E(C_{2})=\emptyset$,}
\item \emph{the paths $Q_{1}$, $Q_{2}$, $\dots$, $Q_{q}$ separated by $E(C)\cap E(C_{2})$ in $C$ satisfy that $E(Q)\subseteq E(Q_{1})$ and for each $i\in[q]\backslash\{1\}$, $M_{2}\cap E(Q_{i})$ is a perfect matching of $Q_{i}-(V(Q_{i})\cap V(C_{2}))$,}
\item \emph{$G[V(C_{1})]-(V(C)\cap V(C_{1}))$ has two perfect matchings $N_{2}$ and $N_{3}$ satisfying $E(C_{1})\backslash$ $(E(C)\cup N_{2}\cup N_{3})\subseteq(M_{1}\cap E(C_{1}))\backslash E(C')$, and}
\item \emph{$(M_{1}\cap E(C_{1}))\backslash E(C')$ is a perfect matching of $C_{1}-(V(C')\cap V(C_{1}))$ and $E(C')\backslash M_{1}$ is a perfect matching of $C'-u_{1}$,}
\end{enumerate}
\vspace{-0.8em}
\emph{then $G$ has $5$ perfect matchings which cover all edges of $G$.}

\vskip 2mm

Suppose $G$ has such two circuits $C$ and $C'$. Set $D_{1}$:=$E(C)\cap E(C_{2})$. We know $D_{1}\subseteq M_{1}$. Let $P_{1,1}$, $P_{1,2}$, $\dots$, $P_{1,q}$ be the paths in $C_{2}$ which are separated by $D_{1}$. We may assume $u_{2}\in V(P_{1,1})$. For each $i\in[q]$, let $\gamma_{i}$ be an edge with the same ends as $Q_{i}$. Set $D_{2}$:=$\{\alpha_{1,j}:j\in[q]\}$ and $D_{3}$:=$\{\gamma_{j}:j\in[q]\}$. We construct a new graph $G_{1}$ with vertex-set $V(G[D_{1}])$ and edge-set $D_{1}\cup D_{2}\cup D_{3}$. We know that both $D_{1}\cup D_{2}$ and $D_{1}\cup D_{3}$ induce hamiltonian circuits of $G_{1}$. For $\alpha_{1,1}\in D_{2}$ and $\gamma_{1}\in D_{3}$, we have by Lemma \ref{2pm} that $G_{1}$ has two perfect matchings $F_{1}$ and $F_{2}$ such that $G_{1}[F_{1}\cup F_{2}]$ has a circuit $C'_{1}$ containing $\{\alpha_{1,1},\gamma_{1}\}$ and either $F_{1}\cap F_{2}\subseteq D_{3}\subseteq F_{1}\cup F_{2}$ or $F_{1}\cap F_{2}\subseteq D_{1}\subseteq F_{1}\cup F_{2}$.

Set $E_{2}$:=$(D_{1}\cap(F_{1}\bigtriangleup F_{2}))\cup(\bigcup_{j\in[q]\ \textrm{s.t.}\ \alpha_{1,j}\in F_{1}\bigtriangleup F_{2}}E(P_{1,j}))\cup(\bigcup_{j\in[q]\ \textrm{s.t.}\ \gamma_{j}\in F_{1}\bigtriangleup F_{2}}E(Q_{j}))$. From the properties (2), (3) and (4) of $C$, we can see that $Q_{1}$ has even length and $Q_{i}$ has odd length for each $i\in[q]\backslash\{1\}$. Hence we can obtain that every component of $G[E_{2}]$ is an even circuit of $G$ and $E_{2}$ can be decomposed into two matchings $N_{4}$ and $N_{5}$ of $G$.

Assume $F_{1}\cap F_{2}\subseteq D_{3}\subseteq F_{1}\cup F_{2}$. Set $N_{6}$:=$(\bigcup_{j\in[q]\ \textrm{s.t.}\ \alpha_{1,j}\in E(G_{1})\backslash(F_{1}\cup F_{2})}$ $(E(P_{1,j})\cap M_{1}))$ $\cup$ $(\bigcup_{j\in[q]\ \textrm{s.t.}}$ $_{\gamma_{j}\in F_{1}\cap F_{2}}(E(Q_{j})\backslash M_{2}))$. We can obtain $(M_{1}\cap(E(C)\cup E(C_{2})))\backslash(E_{2}\cup N_{6})\subseteq D_{1}$ and that $N_{6}$ is a perfect matching of $G[E(C)\cup E(C_{2})]-V(G[E_{2}])$.

For $i=4,5$, set $M_{i}$:=$N_{i-2}\cup N_{i}\cup N_{6}$. Then $M_{4}$ and $M_{5}$ are perfect matchings of $G$ and we have $M_{1}\backslash(M_{4}\cup M_{5})\subseteq(E(C_{1})\backslash(E(C)\cup N_{2}\cup N_{3}))\cup D_{1}$.
From the properties (2), (4) and (5) of $C$ and $C'$, we can obtain that $(M_{1}\bigtriangleup(E(C')\cup E(C_{4})))\cup\{u_{1}u_{2}\}$ is a perfect matching containing $(E(C_{1})\backslash(E(C)\cup N_{2}\cup N_{3}))\cup D_{1}$ in $G$. So the edges $M_{1}$ can be covered by 3 perfect matchings of $G$, which together with $M_{2}$ and $M_{3}$ cover all edges of $G$.

Assume $F_{1}\cap F_{2}\subseteq D_{1}\subseteq F_{1}\cup F_{2}$. Set $N_{7}$:=$(\bigcup_{j\in[q]\ \textrm{s.t.}\ \alpha_{1,j}\in E(G_{1})\backslash(F_{1}\cup F_{2})}(E(P_{1,j})\cap M_{1}))\cup(F_{1}\cap F_{2})\cup(\bigcup_{j\in[q]\ \textrm{s.t.}\ \gamma_{j}\in E(G_{1})\backslash(F_{1}\cup F_{2})}$ $(E(Q_{j})\cap M_{2}))$. We have that $N_{7}$ is a perfect matching of $G[E(C)\cup E(C_{2})]-V(G[E_{2}])$.

For $i=6,7$, set $M_{i}$:=$N_{i-4}\cup N_{i-2}\cup N_{7}$. Then $M_{6}$ and $M_{7}$ are perfect matchings of $G$. We can see $(M_{2}\cap E(C_{1}))\cup(M_{1}\cap E(C_{2}))\subseteq M_{6}\cup M_{7}$. Noting $E(Q)\subseteq E(Q_{1})$ and $\gamma_{1}\in E(C'_{1})$, we have $E(Q)\subseteq E_{2}\subseteq M_{6}\cup M_{7}$. Now we have $E(G)\backslash(M_{3}\cup M_{6}\cup M_{7})\subseteq((M_{1}\cap E(C_{1}))\backslash E(Q))\cup(M_{2}\cap E(C_{2}))$.

Set $M_{8}$:=$((M_{1}\cap E(C_{1}))\bigtriangleup E(C_{3}))\cup(M_{2}\backslash E(C_{1}))$. We know $(E(C_{5})\cap E(C_{1}))\cup(M_{2}\cap E(C_{2}))\subseteq M_{8}$. By Claim 1, $G$ has a perfect matching $M_{9}$ containing $(M_{1}\cap E(C_{1}))\backslash(E(Q)\cup E(C_{5}))$. Now we have $(M_{1}\cap E(C_{1}))\backslash E(Q)\subseteq M_{8}\cup M_{9}$. So $M_{3}$, $M_{6}$, $M_{7}$, $M_{8}$ and $M_{9}$ are 5 perfect matchings of $G$ which cover all edges of $G$. Claim 2 is proved.

\vskip 2mm

Let $C_{6}$ be the circuit containing $u_{1}$ in $G[E(C_{1})\bigtriangleup E(C_{5})]$. Assume that every circuit different from $C_{6}$ in $G[E(C_{1})\bigtriangleup E(C_{5})]$ contains no edges in $C_{2}$. Then $E(C_{6})\cap E(C_{2})\neq\emptyset$. Let $Q'_{1}$, $Q'_{2}$, $\dots$, $Q'_{q'}$ be the paths separated by $E(C_{6})\cap E(C_{2})$ in $C_{6}$ such that $u_{1}\in V(Q'_{1})$. We can easily check that $C_{6}$ and $C_{3}$ meet the requirements (1)-(5) in Claim 2. So we know by Claim 2 that $G$ has 5 perfect matchings which cover all edges of $G$. Next we assume that there is a circuit $C_{7}$ different from $C_{6}$ in $G[E(C_{1})\bigtriangleup E(C_{5})]$ such that $E(C_{7})\cap E(C_{2})\neq\emptyset$.

Let $P_{2,1}$, $P_{2,2}$, $\dots$, $P_{2,p}$ be the components in $G[E(C_{7})\cap E(C_{1})]$. We know that for each $i\in[p]$, $P_{2,i}$ is a $M_{2}$-$M_{1}$ alternating path satisfying $|E(P_{2,i})\cap M_{2}|=|E(P_{2,i})\cap M_{1}|+1$. For $i=3,4$, let $P_{i,1}$, $P_{i,2}$, $\dots$, $P_{i,p}$ be the paths in $C_{25-6i}$ which are separated by $P_{2,1}$, $P_{2,2}$, $\dots$, $P_{2,p}$. We may assume $u_{1}\in V(P_{4,1})$. Set $B_{i}$:=$\{\alpha_{i+1,j}:j\in[p]\}$ for $i=1,2,3$. Now we construct a new graph $G_{2}$ whose vertex-set consists of the ends of edges in $B_{1}$ and edge-set is $B_{1}\cup B_{2}\cup B_{3}$. We know that both $B_{1}\cup B_{2}$ and $B_{1}\cup B_{3}$ induce hamiltonian circuits of $G_{2}$. Set $B'_{2}$:=$\{\alpha_{3,j}\in B_{2}:E(P_{3,j})\cap E(C_{2})\neq\emptyset\}$.

For $B'_{2}\in B_{2}$ and $\alpha_{4,1}\in B_{3}$, we have by Lemma \ref{2pm} that $G_{2}$ has two perfect matchings $F_{3}$ and $F_{4}$ such that (1) either $F_{3}\cap F_{4}\subseteq B_{3}\subseteq F_{3}\cup F_{4}$ or $F_{3}\cap F_{4}\subseteq B_{1}\subseteq F_{3}\cup F_{4}$, (2) $G_{2}[F_{3}\cup F_{4}]$ has a circuit $C'_{2}$ containing $\alpha_{4,1}$ such that $B'_{2}\cap E(C'_{2})\neq\emptyset$ and every circuit different from $C'_{2}$ in $G_{2}[F_{3}\cup F_{4}]$ contains no edges in $B'_{2}$, and (3) if $B_{1}\subseteq F_{3}\cup F_{4}$, then $G_{2}$ has a circuit $C'_{3}$ containing $\alpha_{4,1}$ such that $B_{2}\cap E(C'_{2})\cap E(C'_{3})=\emptyset$, $B_{3}\backslash(F_{3}\cup F_{4})\subseteq B_{3}\backslash E(C'_{3})$ and $B_{3}\backslash E(C'_{3})$ is a perfect matching of $G_{2}-V(C'_{3})$.

Let $C_{8}$ be the circuit of $G$ which is obtained from $C'_{2}$ by replacing each edge $\alpha_{i,j}$ in $C'_{2}$ by the corresponding path $P_{i,j}$. We know $u_{1}\in V(C_{8})$ and $\emptyset\neq E(C_{8})\cap E(C_{2})\subseteq E(C_{5})\cap E(C_{2})\subseteq M_{1}$. Noting $E(C_{5})\cap E(C_{4})=\emptyset$, we have $E(C_{8})\cap E(C_{2})\cap E(C_{4})=\emptyset$.

Assume $F_{3}\cap F_{4}\subseteq B_{3}\subseteq F_{3}\cup F_{4}$. We can know that $B_{3}\cap E(C'_{2})$ is a perfect matching of $C'_{2}$. It implies that $(M_{1}\cap E(C_{1}))\backslash E(C_{8})$ is a perfect matching of $C_{1}-(V(C_{8})\cap V(C_{1}))$ and the paths $Q''_{1}$, $Q''_{2}$, $\dots$, $Q''_{s'}$ separated by $E(C_{8})\cap E(C_{2})$ in $C_{8}$ satisfy that for each $i\in[s']$, $M_{1}\cap E(Q''_{i})$ is a perfect matching of $Q''_{i}-(V(Q''_{i})\cap V(C_{2}))$ if $u_{1}\notin V(Q''_{i})$. It means that for the perfect matching $M_{2}$ of $G$, $C_{8}$ meets the requirements (1)-(3) in Lemma \ref{3pm}. By Lemma \ref{3pm}, the edges $M_{1}$ can be covered by 3 perfect matchings of $G$, which together with $M_{2}$ and $M_{3}$ cover all edges of $G$.

Assume next $F_{3}\cap F_{4}\subseteq B_{1}\subseteq F_{3}\cup F_{4}$. Set $E_{3}$:=$\bigcup_{i=2}^{4}(\bigcup_{j\in[p]\ \textrm{s.t.}\ \alpha_{i,j}\in F_{3}\bigtriangleup F_{4}}E(P_{i,j}))$. We can know that $(E(C_{1})\backslash E_{3})\cap M_{2}$ is a perfect matching of $C_{1}-V(G[E_{3}])$ and every component of $G[E_{3}\backslash E(C_{8})]$ is an even circuit of $G$. Noting also that every circuit different from $C'_{2}$ in $G_{2}[F_{3}\cup F_{4}]$ contains no edges in $B'_{2}$, we have that $G[V(C_{1})]-(V(C_{8})\cap V(C_{1}))$ has two perfect matchings $N_{8}$ and $N_{9}$ such that $E(C_{1})\backslash(E(C_{8})\cup N_{8}\cup N_{9})=(E(C_{1})\backslash E_{3})\cap M_{1}$.

Let $P_{5,1}$, $P_{5,2}$, $\dots$, $P_{5,r}$ be the paths separated by $E(C_{8})\cap E(C_{2})$ in $C_{8}$ such that $u_{1}\in V(P_{5,1})$. We can see $E(Q)\subseteq E(P_{5,1})$. Since $F_{3}\cap F_{4}\subseteq B_{1}\subseteq F_{3}\cup F_{4}$, $B_{1}\cap E(C'_{2})$ is a perfect matching of $C'_{2}$. It implies that for each $i\in[r]\backslash\{1\}$, $M_{2}\cap E(P_{5,i})$ is a perfect matching of $P_{5,i}-(V(P_{5,i})\cap V(C_{2}))$.

Let $C_{9}$ be the circuit of $G$ which is obtained from $C'_{3}$ by replacing each edge $\alpha_{i,j}$ in $C'_{3}$ by the corresponding path $P_{i,j}$. We know $u_{1}\in V(C_{9})$ and $E(C_{9})\cap E(C_{2})\subseteq E(C_{5})\cap E(C_{2})$. Noting $B_{2}\cap E(C'_{2})\cap E(C'_{3})=\emptyset$, we can obtain $E(C_{8})\cap E(C_{9})\cap E(C_{2})=\emptyset$. Noting $B_{3}\backslash(F_{3}\cup F_{4})\subseteq B_{3}\backslash E(C'_{3})$ and that $B_{3}\backslash E(C'_{3})$ is a perfect matching of $G_{2}-V(C'_{3})$, we can obtain $(E(C_{1})\backslash E_{3})\cap M_{1}\subseteq(M_{1}\cap E(C_{1}))\backslash E(C_{9})$ and that $(M_{1}\cap E(C_{1}))\backslash E(C_{9})$ is a perfect matching of $C_{1}-(V(C_{9})\cap V(C_{1}))$. We also can know that $B_{3}\cap E(C'_{3})$ is a perfect matching of $C'_{3}$. This implies that $E(C_{9})\backslash M_{1}$ is a perfect matching of $C_{9}-u_{1}$.

Now we know that $C_{8}$ and $C_{9}$ are two circuits of $G$ meeting the requirements (1)-(5) in Claim 2. By Claim 2, $G$ has $5$ perfect matchings which cover all edges of $G$.

\end{proof}

\end{document}